\newcommand{\ra}{\rightarrow}
\newcommand{\lra}{\longrightarrow}
\newcommand{\into}{\hookrightarrow}
\newcommand{\iso}{\stackrel{\sim}{\ra}}
\newlength{\ownl}
\newcommand{\norm}{{\mbox{\bf N}}}
\newcommand{\ndiv}{{\mbox{$\not| $}}}
\newcommand{\Art}{{\operatorname{Art}\,}}
\newcommand{\BC}{{\operatorname{BC}\,}}
\newcommand{\Frob}{{\operatorname{Frob}}}
\newcommand{\Gal}{{\operatorname{Gal}\,}}
\newcommand{\Hom}{{\operatorname{Hom}\,}}
\newcommand{\Iw}{{\operatorname{Iw}}}
\newcommand{\WD}{{\operatorname{WD}}}
\newcommand{\Spp}{{\operatorname{Sp}\,}}
\newcommand{\rec}{{\operatorname{rec}}}
\newcommand{\tr}{{\operatorname{tr}\,}}
\newcommand{\nind}{{\operatorname{n-Ind}\,}}
\newcommand{\st}{{\operatorname{st}}}
\newcommand{\ab}{{\operatorname{ab}}}
\newcommand{\semis}{{\operatorname{ss}}}
\newcommand{\Fsemis}{{\operatorname{F-ss}}}
\newcommand{\rig}{{\operatorname{rig}}}
\newcommand{\reg}{{\operatorname{reg}}}
\newcommand{\A}{{\mathbb{A}}}
\newcommand{\C}{{\mathbb{C}}}
\newcommand{\G}{{\mathbb{G}}}
\newcommand{\Q}{{\mathbb{Q}}}
\newcommand{\R}{{\mathbb{R}}}
\newcommand{\Z}{{\mathbb{Z}}}
\newcommand{\gz}{{\mathfrak{z}}}
\newcommand{\barF}{\overline{{F}}}
\newcommand{\barK}{\overline{{K}}}
\newcommand{\barL}{\overline{{L}}}
\newcommand{\barM}{\overline{{M}}}
\newcommand{\barQQ}{\overline{{\Q}}}
\newcommand{\tR}{\widetilde{{R}}}
\newcommand{\tS}{\widetilde{{S}}}
\newcommand{\tu}{\widetilde{{u}}}
 \newcommand{\tsigma   }{\widetilde{\sigma}}
\newcommand{\Qbar}{{\overline{\Q}}}
\def\RCS$#1: #2 ${\expandafter\def\csname RCS#1\endcsname{#2}}
\newcommand{\To}{\longrightarrow}
\newcommand{\onto}{\twoheadrightarrow}
\newcommand{\isoto}{\stackrel{\sim}{\To}}
\newcommand{\bb}{\mathbb}
\newcommand{\Sp}{\operatorname{Sp}}
\newcommand{\HT}{\operatorname{HT}}
\newcommand{\cA}{\mathcal{A}}
\newcommand{\cB}{\mathcal{B}}
\newcommand{\cH}{\mathcal{H}}
\newcommand{\cJ}{\mathcal{J}}
\newcommand{\cO}{\mathcal{O}}
\newcommand{\cT}{\mathcal{T}}
\newcommand{\cZ}{\mathcal{Z}}
\newcommand{\Qlbar}{\overline{\Q}_{l}}
 \newtheorem{ithm}{Theorem}
\newtheorem{thm}{Theorem}[section]
\newtheorem{cor}[thm]{Corollary}
 \theoremstyle{definition}
 \theoremstyle{definition}
 \theoremstyle{remark}
\numberwithin{equation}{subsection}
\theoremstyle{definition}
\begin{document}
\title{Local-global compatibility for $l=p$, II.}

\author{Thomas Barnet-Lamb}\email{tbl@brandeis.edu}\address{Department of Mathematics, Brandeis University}
\author{Toby Gee} \email{gee@math.northwestern.edu} \address{Department of
  Mathematics, Northwestern University} \author{David Geraghty}
\email{geraghty@math.princeton.edu}\address{Princeton University and
  Institute for Advanced Study} \author{Richard Taylor}
\email{rtaylor@math.harvard.edu}\address{Department of Mathematics,
  Harvard University} \thanks{The second author was partially supported
  by NSF grant DMS-0841491, the third author was partially supported
  by NSF grant DMS-0635607 and the fourth author was partially
  supported by NSF grant DMS-0600716 and by the Oswald Veblen and Simonyi Funds at the IAS}  \subjclass[2000]{11F33.}
\begin{abstract}We prove the compatibility at places dividing
$l$ of the local and global Langlands correspondences for the $l$-adic
Galois representations associated to regular algebraic essentially
(conjugate) self-dual cuspidal automorphic representations of $GL_n$
over an imaginary CM or totally real field. We prove this compatibility up to
semisimplification in all cases, and up to Frobenius semisimplification in the
case of Shin-regular weight.
\end{abstract}
\maketitle
\newpage

\section*{Introduction.}
\label{sec:intro}

Our main result is as follows (see Theorem \ref{main-thm} and Corollary \ref{main-cor}).

\begin{ithm}
  \label{thma}
  Let $F$ be an imaginary CM field or totally real field, let $(\Pi,\chi)$ be a
  regular, algebraic, essentially (conjugate) self-dual automorphic
  representation of $GL_m(\A_F)$ and let $\imath:\Qlbar\iso\C$.    If
  $v|l$ is a place of $F$, then
\[ \imath\WD(r_{l,\imath}(\Pi)|_{G_{F_v}})^{\semis}\cong
\rec(\Pi_v\otimes|\det|^{(1-m)/2})^{\semis}.\]
Furthermore, if $\Pi$ has Shin-regular weight, then 
\[ \imath\WD(r_{l,\imath}(\Pi)|_{G_{F_v}})^{\Fsemis}\cong
\rec(\Pi_v\otimes|\det|^{(1-m)/2}).\]
\end{ithm}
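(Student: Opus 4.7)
The plan is to first reduce to the case that $F$ is an imaginary CM field with maximal totally real subfield $F^+$ and that $\Pi$ is conjugate self-dual. This is achieved via solvable cyclic base change (Arthur--Clozel): both sides of the desired identity transform compatibly under base change, and the semisimplified Weil--Deligne parameter at $v$ is determined by its restriction to a suitable solvable extension, so descending the conclusion back to the original $F$ is routine. In the process one arranges additional local conditions --- every place of $F^+$ above $l$ splits in $F$, some auxiliary ramification is introduced, and $\Pi$ is made sufficiently well-behaved at the places outside $v$ --- so that $r_{l,\imath}(\Pi)$ may be realized inside the $\imath^{-1}$-isotypic part of the étale cohomology of a compact unitary Shimura variety.

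For the Shin-regular conclusion, I would exploit the fact that Shin-regularity of the infinity type forces the contribution of $\Pi$ to the cohomology of such a Shimura variety to concentrate in a single degree. Combined with the smoothness and properness of the Shimura variety (and Fontaine's comparison theorems relating potentially semistable representations to the $p$-adic étale cohomology), this concentration yields that $r_{l,\imath}(\Pi)|_{G_{F_v}}$ is pure, hence Frobenius-semisimple, and allows one to read off the associated Weil--Deligne parameter directly from the $p$-adic Hodge theoretic invariants of the cohomology. The identification of this parameter with $\rec(\Pi_v \otimes |\det|^{(1-m)/2})$ is then carried out by combining the already-known local-global compatibility at primes $l' \neq l$ (which pins down the Frobenius conjugacy class and the monodromy operator) with the $p$-adic Hodge-theoretic input (which records the Hodge--Tate weights and the de Rham/semistable structure at $v$).

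The general semisimplification statement is obtained by a congruence and deformation argument that reduces to the Shin-regular case just established. I would embed $\Pi$ in a $p$-adic analytic family of automorphic Galois representations --- a Hida family in the ordinary case, or a suitable eigenvariety in general --- in which Shin-regular weight specializations are Zariski-dense. At each such classical point the preceding step computes the semisimplified WD-parameter on the nose, and the traces of Frobenius (which determine the semisimplification) vary rigid-analytically over the family and therefore specialize correctly at $\Pi$. The main obstacle is the construction of a suitable family through $\Pi$: one needs sufficiently many Shin-regular classical points sharing the same tame level and appropriate local type at $v$, and this requires careful analysis of the local Galois deformation rings at $v$ together with change-of-weight arguments in the spirit of the authors' companion papers. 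Note that Frobenius-semisimplification is genuinely not preserved under rigid-analytic specialization --- a family of Frobenius-semisimple parameters can specialize to one which is not --- which explains why the general conclusion only holds up to semisimplification, while the Shin-regular case retains the stronger F-semisimplification.
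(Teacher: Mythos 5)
There is a genuine gap: your plan for the Shin-regular case does not address the central difficulty, which is what to do when $\Pi_v$ is deeply ramified. Reading off the Weil--Deligne parameter ``directly from the $p$-adic Hodge-theoretic invariants of the cohomology'' of a compact unitary Shimura variety is essentially the content of the companion paper \cite{blggtlocalglobalI}, and that argument only works when $\Pi_v$ has Iwahori-fixed vectors (so that one has an integral semistable model and can invoke a log-crystalline comparison). In general, the base change needed to make $\Pi_v$ Iwahori-spherical is \emph{not} solvable (think of a supercuspidal attached to a non-abelian extension), so solvable cyclic base change is insufficient. The paper's key idea is precisely this: the semisimplified parameter is determined by traces of Weil group elements of nonzero valuation, and such a trace can be computed over a (non-Galois) extension $E/F$ over which $\Pi$ becomes Iwahori-spherical and which still contains the chosen Weil element; one then descends along $E/F$ using Harris's trick, which requires knowing that the \emph{global} Galois representation $r_{l',\imath'}(\Pi_{F'})$ is irreducible for some auxiliary prime $l'$. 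That irreducibility is where the hypothesis of \emph{extreme regularity} enters, via Theorem~5.4.2 of \cite{blggt}. You make no mention of extreme regularity, of irreducibility, or of non-Galois descent; without them, the Shin-regular base case simply isn't established.

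Your logical dependency is also inverted. You propose to prove the Shin-regular case first and then interpolate to get the general semisimplification statement. The paper instead first proves the theorem in the (extremely regular \emph{and} Shin-regular) case, interpolates via the Chenevier--Harris eigenvariety argument to obtain the general ``$\prec$'' statement (a relation $\imath\WD(r_{l,\imath}(\Pi)|_{G_{F_v}})^{\Fsemis}\prec\rec(\Pi_v\otimes|\det|^{(1-m)/2})$, which is strictly stronger than equality of semisimplifications), and then deduces the Shin-regular conclusion \emph{from} the general $\prec$ statement together with purity (Corollary~1.3 and Proposition~1.1 of \cite{blggtlocalglobalI}). Your remark about Frobenius-semisimplification failing to be preserved under rigid-analytic specialization is correct and is indeed part of why the family argument cannot yield the Frobenius-semisimple conclusion directly, but the actual mechanism is finer: what interpolates over the eigenvariety is the pseudo-character of the Weil group (hence the semisimplification) together with a lower-semicontinuity constraint on the monodromy operator encoded by $\prec$, and the Frobenius-semisimple refinement is recovered only where one separately knows purity. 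As written, your interpolation step both starts from the wrong base case and interpolates the wrong object.
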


Here $r_{l,\imath}(\Pi)$ denotes the $l$-adic representation assocaited to $\Pi$ (and $\imath$); and $\WD(r)$ denotes the Weil-Deligne representation attached to a de Rham $l$-adic representation $r$ of the absolute Galois group of an $l$-adic field; and $\rec$ denotes the local Langlands correspondence; and $\Fsemis$ denotes Frobenius semi-simplification. 
(See Section \ref{sec:irreducibility} for details on the terminology.)
In fact, we prove a slight refinement of this result which gives some
information about the monodromy operator in the case where $\Pi$ does
not have Shin-regular weight; see Section \ref{sec:irreducibility} for
the details of this.

It is important in some applications to have this compatibility at
places dividing $l$; for example, our original motivation for
considering this problem was to improve the applicability of the main
results of \cite{blggt}; in that paper a variety of automorphy lifting
theorems are proved via making highly ramified base changes, and one
loses control of the level of the automorphic representations under
consideration. This control can be recovered if one knows local-global
compatibility, and this is important in applications to the weight
part of Serre's conjecture (cf. \cite{BLGGU2}, \cite{BLGGUn}).

The proof of Theorem \ref{thma} is surprisingly simple, and relies on
a generalisation of a base change trick that we learned from the
papers \cite{kisindefrings} and \cite{MR2538615} (see the proof of
Theorem 4.3 of \cite{kisindefrings} and Section 2.2 of
\cite{MR2538615}). 
The idea is as follows. Suppose that $\Pi$ has
Shin-regular weight. We wish to determine the Weil-Deligne
representation
$\imath\WD(r_{l,\imath}(\Pi)|_{G_{F_v}})^{\Fsemis}$. The monodromy may
be computed after any finite base change, and in particular we may
make a base change so that $\Pi$ has Iwahori-fixed vectors, which is
the situation covered by \cite{blggtlocalglobalI}; so it suffices to
compute the representation of the Weil group $W_{F_v}$. It is
straightforward to check that in order to do so it is enough to
compute the traces of the elements $\sigma\in W_{F_v}$ of nonzero
valuation (that is, those elements which map to a nonzero power of the
Frobenius element in the Galois group of the residue field). This
trace is then computed as follows: one makes a global base change to a
CM field $E/F$ such that there is a place $w$ of $E$ lying over $v$
such that $\BC_{E/F}(\Pi)_w$ has Iwahori-fixed vectors, and $\sigma$
is an element of $W_{E_w}\le W_{F_v}$. By the compatibility of base
change with the local Langlands correspondence, the trace of $\sigma$
on $\imath\WD(r_{l,\imath}(\Pi)|_{G_{F_v}})^{\Fsemis}$ may then
be computed over $E$, where the result follows from
\cite{blggtlocalglobalI}.

The subtlety in this argument is that the field $E/F$ need not be
Galois, so one cannot immediately appeal to solvable base
change. However, it will have solvable normal closure, so that by a
standard descent argument due to Harris it is enough to know that for
some prime $l'$, the global Galois representation
$r_{l',\imath'}(\Pi)$ is irreducible. Under the additional assumption
that $\Pi$ has extremely regular weight, the existence of such an $l'$
is established in \cite{blggt}. Having thus established Theorem \ref{thma}
in the case that $\Pi$ has extremely regular and Shin-regular weight,
we then pass to the general case by means of an $l$-adic interpolation
argument of Chenevier and Harris, \cite{chenevierharris} and \cite{chenevier}. 
The details are in Section \ref{sec:Chenevier-Harris}.

\subsection*{Notation and terminology.}

We write all matrix transposes on the left; so ${}^t\!A$ is the
transpose of $A$. We let $B_m\subset GL_m$ denote the Borel subgroup
of upper triangular matrices and $T_m\subset GL_m$ the diagonal
torus. We let $I_m$ denote the identity matrix in $GL_m$. 
If $M$ is a field, we let $\barM$ denote an algebraic closure of $M$
and $G_M$ the absolute Galois group $\Gal(\barM/M)$.  Let
$\epsilon_l$ denote the $l$-adic cyclotomic character

Let $p$ be a rational prime and $K/\Q_p$ a finite extension. We let
$\cO_K$ denote the ring of integers of $K$, $\wp_K$ the maximal ideal
of $\cO_K$, $k(\nu_K)$ the residue field $\cO_K/\wp_K$,
$\nu_K:K^\times\onto \Z$ the canonical valuation and
$|\;|_K:K^\times\ra \Q^\times$ the absolute value given by
$|x|_K=\#(k(\nu_K))^{-\nu_K(x)}$. We let $|\;|_K^{1/2}:K^\times \ra
\R^\times_{>0}$ denote the unique positive unramified square root of $|\;|_K$. If $K$ is clear from the context, we
will sometimes write $|\;|$ for $|\;|_K$. We let $\Frob_K$ denote the
geometric Frobenius element of $G_{k(\nu_K)}$ and $I_K$ the kernel of
the natural surjection $G_K\onto G_{k(\nu_K)}$. We will sometimes
abbreviate $\Frob_{\Q_p}$ by $\Frob_p$. We let $W_K$ denote the
preimage of $\Frob_K^\Z$ under the map $G_K\onto G_{k(\nu(K))}$,
endowed with a topology by decreeing that $I_K\subset W_K$ with its
usual topology is an open subgroup of $W_K$. We let $\Art_K : K^\times
\iso W_K^\ab$ denote the local Artin map, normalized to take
uniformizers to lifts of $\Frob_K$.

Let $\Omega$ be an algebraically closed field of characteristic $0$. A
Weil-Deligne representation of $W_K$ over $\Omega$ is a triple
$(V,r,N)$ where $V$ is a finite dimensional vector space over
$\Omega$, $r:W_K\ra GL(V)$ is a representation with open kernel and
$N:V\ra V$ is an endomorphism with
$r(\sigma)Nr(\sigma)^{-1}=|\Art^{-1}_K(\sigma)|_K N$. We say that
$(V,r,N)$ is Frobenius semisimple if $r$ is semisimple. We let
$(V,r,N)^{\Fsemis}$ denote the Frobenius semisimplification
of $(V,r,N)$ (see for instance Section 1 of \cite{ty}) and we let $(V,r,N)^{\semis}$ denote
$(V,r^{\semis},0)$. If $\Omega$ has the
same cardinality as $\C$, we have the notions of a Weil-Deligne
representation being \emph{pure} or \emph{pure of weight $k$} -- see
the paragraph before Lemma 1.4 of \cite{ty}.

We will let $\rec_K$ be the local Langlands correspondence of
\cite{ht}, so that if $\pi$ is an irreducible complex admissible
representation of $GL_n(K)$, then $\rec_K(\pi)$ is a Weil-Deligne
representation of the Weil group $W_K$.  We will write $\rec$ for
$\rec_K$ when the choice of $K$ is clear. If $\rho$ is a continuous
representation of $G_K$ over $\barQQ_l$ with $l\neq p$ then we will
write $\WD(\rho)$ for the corresponding Weil-Deligne representation of
$W_K$. (See for instance Section 1 of \cite{ty}.) 

If $m\geq 1$ is an integer, we let $\Iw_{m,K}\subset GL_m(\cO_K)$
denote the subgroup of matrices which map to an upper triangular
matrix in $GL_m(k(\nu_K))$. If $\pi$ is an irreducible
admissible supercuspidal representation of $GL_m(K)$ and $s\geq 1$ is
an integer we let $\Spp_s(\pi)$ be the square integrable
representation of $GL_{mr}(K)$ defined for instance in Section I.3 of
\cite{ht}. Similarly, if $r : W_K \ra GL_m(\Omega)$ is an irreducible
representation with open kernel and $\pi$ is the supercuspidal
representation $\rec_K^{-1}(r)$, we let $\Sp_s(r)=\rec_K(\Sp_s(\pi))$.
If $K'/K$ is a finite extension and if $\pi$ is an irreducible smooth
representation of $GL_n(K)$ we will write $\BC_{K'/K}(\pi)$ for the
base change of $\pi$ to $K'$ which is characterized by
$\rec_{K'}(\pi_{K'})= \rec_K(\pi)|_{W_{K'}}$.

If $\rho$ is a continuous de Rham representation of $G_K$ over
$\barQQ_p$ then we will write $\WD(\rho)$ for the corresponding
Weil-Deligne representation of $W_K$ (its construction, which is due
to Fontaine, is recalled in Section 1 of \cite{ty}), and if $\tau:K \into \barQQ_p$
is a continuous embedding of fields then we will write
$\HT_\tau(\rho)$ for the multiset of Hodge-Tate numbers of $\rho$ with
respect to $\tau$. Thus $\HT_\tau(\rho)$ is a multiset of $\dim \rho$
integers.  In fact, if $W$ is a de Rham representation of $G_K$ over
$\barQQ_p$ and if $\tau:K \into \barQQ_p$ then the multiset
$\HT_\tau(W)$ contains $i$ with multiplicity $\dim_{\barQQ_p} (W
\otimes_{\tau,K} \widehat{\barK}(i))^{G_K} $. Thus for example
$\HT_\tau(\epsilon_l)=\{ -1\}$.

If $F$ is a number field and $v$ a prime of $F$, we will often denote
$\Frob_{F_v}$, $k(\nu_{F_v})$ and $\Iw_{m,F_v}$ by $\Frob_v$, $k(v)$
and $\Iw_{m,v}$. If $\sigma:F \into \barQQ_p$ or $\C$ is an embedding of fields,
then we will write $F_\sigma$ for the 
closure of the image of $\sigma$. If $F'/F$ is a soluble, finite Galois extension and
if $\pi$ is a cuspidal automorphic representation of $GL_m(\A_F)$ we
will write $\BC_{F'/F}(\pi)$ for its base change to $F'$, an
automorphic representation of $GL_n(\A_{K'})$. If $R:G_F \ra
GL_m(\Qlbar)$ is a continuous representation, we say that $R$ is {\it
  pure of weight $w$} if for all but finitely many primes $v$ of $F$,
$R$ is unramified at $v$ and every eigenvalue of $R(\Frob_v)$ is a
Weil $(\#k(v))^w$-number. (See Section 1 of \cite{ty}.)  If $F$ is an
imaginary CM field, we will denote its maximal totally real subfield
by $F^+$ and let $c$ denote the non-trivial element of $\Gal(F/F^+)$.

\section{Automorphic Galois
  representations}
\label{sec:irreducibility}
\setcounter{subsection}{1}

We recall some now-standard
notation and terminology. 
Let $F$ be an imaginary CM field or totally real field. Let $F^+$ denote the
maximal totally real subfield of $F$. 
By a {\em RAECSDC} or {\em RAESDC} (regular, algebraic, essentially
(conjugate) self dual, cuspidal) automorphic representation of
$GL_m(\A_{F})$ we mean a pair $(\Pi,\chi)$ where
\begin{itemize}
\item $\Pi$ is a cuspidal automorphic representation of $GL_m(\A_F)$ such that  $\Pi_\infty$ has the same infinitesimal character as some irreducible
algebraic representation of the restriction of scalars from $F$ to $\Q$ of
$GL_m$,
\item $\chi:\A_{F^+}^\times/(F^+)^\times \ra \C^\times$ is a
  continuous character such that $\chi_v(-1)$ is independent of $v|\infty$,
\item and $\Pi^c \cong \Pi^\vee \otimes (\chi \circ \norm_{F/F^+} \circ \det)$.
\end{itemize}
If $\chi$ is the trivial character we will often drop it from the
notation and refer to $\Pi$ as a RACSDC or RASDC (regular, algebraic, (conjugate)
self dual, cuspidal) automorphic representation. We will say that
$(\Pi,\chi)$ has {\em level prime to $l$} (resp. {\em level
  potentially prime to $l$}) if for all $v|l$ the representation
$\Pi_v$ is unramified (resp. becomes unramified after a finite base
change).

If $\Omega$ is an algebraically closed field of characteristic $0$ we will
write $(\Z^m)^{\Hom(F,\Omega),+}$ for the set of $a=(a_{\tau,i}) \in (\Z^m)^{\Hom(F,\Omega)}$ satisfying
\[ a_{\tau,1} \geq \dots \geq a_{\tau,m}. \]
Let $w \in \Z$. If $F$ is totally real or imaginary CM (resp. if $\Omega=\C$) we will write $(\Z^m)^{\Hom(F,\Omega)}_w$
for the subset of elements $a \in (\Z^m)^{\Hom(F,\Omega)}$ with 
\[ a_{\tau,i}+a_{\tau \circ c, m+1-i}=w \]
(resp. 
\[ a_{\tau,i}+a_{c \circ \tau, m+1-i}=w.) \]
(These definitions are consistent when $F$ is totally real or
imaginary CM and $\Omega=\C$.)
If $F'/F$ is a finite extension we define $a_{F'} \in (\Z^m)^{\Hom(F',\Omega),+}$ by
\[ (a_{F'})_{\tau,i} = a_{\tau|_F,i}. \] Following \cite{shin} we will
be interested, \emph{inter alia}, in the case that either $m$ is odd;
or that $m$ is even and for some $\tau \in \Hom(F,\Omega)$ and for some
odd integer $i$ we have $a_{\tau,i}>a_{\tau,i+1}$. If either of these
conditions hold then we will say that $a$ is {\em Shin-regular}. (This is often referred to as `slightly regular' in the literature. However as this notion is strictly stronger than `regularity' we prefer the terminology `Shin-regular'.) Following \cite{blggt}, we say that
$a$ is {\em extremely regular} if for some $\tau$ the $a_{\tau,i}$
have the following property: for any subsets $H$ and $H'$ of $\{
a_{\tau,i}+n-i\}_{i=1}^n$ of the same cardinality, if $\sum_{h \in H}
h = \sum_{h \in H'} h$ then $H=H'$.

If $a \in (\Z^m)^{\Hom(F,\C),+}$, let $\Xi_a$ denote the irreducible algebraic representation of
$GL_m^{\Hom(F,\C)}$ which is the tensor product over $\tau$ of the
irreducible representations of $GL_n$ with highest weights $a_\tau$. We will
say that a RAECSDC automorphic representation $\Pi$ of $GL_m(\A_F)$ has
{\em weight $a$} if $\Pi_\infty$ has the same infinitesimal character as
$\Xi_a^\vee$. Note that in this case $a$ must lie in $(\Z^m)^{\Hom(F,\C)}_w$ for some $w \in \Z$.

We recall (see  Theorem 1.2 of \cite{blght}) that to a RAECSDC or RAESDC automorphic representation $(\Pi,\chi)$ of
$GL_m(\A_F)$ and $\imath:\Qlbar \iso \C$ we can associate a continuous semisimple
representation
\[ r_{l,\imath}(\Pi): \Gal(\barF/F) \lra GL_m(\Qlbar) \]
with the properties described in Theorem 1.2 of \cite{blght}. In particular
\[ r_{l,\imath}(\Pi)^c \cong r_{l,\imath}(\Pi)^\vee \otimes \epsilon_l^{1-m}
r_{l,\imath}(\chi)|_{G_F},\] 
where $r_{l,\imath}(\chi): G_{F^+}\to \Qlbar^\times$ is as
defined in Section 1 of \cite{blght}.
For $v|l$ a place of $F$, the representation
$r_{l,\imath}(\Pi)|_{G_{F_v}}$ is de Rham and if $\tau:F \into \barQQ_l$ then
\[ \HT_\tau(r_{l,\imath}(\pi))=\{ a_{\imath\tau,1}+m-1, a_{\imath \tau,2}+m-2,...,a_{\imath \tau,m} \}. \]
If $v\ndiv l$, then the main result of \cite{ana} states that
\[  \imath\WD(r_{l,\imath}(\Pi)|_{G_{F_v}})^{\Fsemis} \cong \rec(\Pi_v \otimes
|\det|^{(1-m)/2}). \] 

Let $p$ be a prime number,  $K/\Q_p$ be a finite extension and let $\Omega$ be an
algebraically closed field of characteristic $0$. Let $\cJ$ denote the
set of equivalence classes of irreducible representations of $W_K$
over $\Omega$ with open kernel, where $s \sim s'$ if
$s\cong s'\otimes \chi\circ \det$ for some unramified
character $\chi:K^\times \to \Omega^\times$.  Let $\rho=(V,r,N)$ be a
Weil-Deligne representation of $W_K$ over $\Omega$. We decompose
\[ V \cong \bigoplus_{\sigma\in\cJ}V[\sigma] \]
where $V[\sigma]$ is the largest $W_K$-submodule of $V$ with all its
irreducible subquotients lying in $\sigma$. Then each $V[\sigma]$ is
stable by $N$ and $\rho[\sigma]:=(V[\sigma],r|_{V[\sigma]},N|_{V[\sigma]})$ is a Weil-Deligne
subrepresentation of $(V,r,N)$. For each
$\sigma\in\cJ$ with $V[\sigma]\neq (0)$, there is a unique decreasing
sequence of integers $m_1(\rho,\sigma)\geq \dots \geq
m_{n(\rho,\sigma)}(\rho,\sigma)\geq 1$ with
\[ \rho[\sigma]^{\Fsemis} \cong \bigoplus_{i=1}^{n(\rho,\sigma)} \Sp_{m_i(\rho,\sigma)}(s_i)\]
$s_i\in \sigma$ for each $i$. If $\rho'$ is another
Weil-Deligne representation of $W_K$ over $\Omega$, we say that
\[ \rho \prec \rho'\]
if $\rho^{\semis}\cong (\rho')^{\semis}$ and if for each
$\sigma\in\cJ$ we have
\[ m_1(\rho,\sigma)+\dots+m_i(\rho,\sigma) \leq m_1(\rho',\sigma)+\dots+m_i(\rho',\sigma)\]
for each $i\geq 1$. The goal of this paper is to establish the following local-global
compatibility result at places dividing $l$.

We now state our main theorem.
\begin{thm}
  \label{main-thm}
Let $(\Pi,\chi)$ be a RAECSDC automorphic representation of
$GL_m(\A_F)$ and let $\imath:\Qlbar\iso\C$. If $v|l$ is a place of
$F$, then
\[ \imath\WD(r_{l,\imath}(\Pi)|_{G_{F_v}})^{\Fsemis}\prec
\rec(\Pi_v\otimes|\det|^{(1-m)/2}).\]
Furthermore, if $\Pi$ has Shin-regular weight, then 
\[ \imath\WD(r_{l,\imath}(\Pi)|_{G_{F_v}})^{\Fsemis}\cong
\rec(\Pi_v\otimes|\det|^{(1-m)/2}).\]
\end{thm}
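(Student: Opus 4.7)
The plan, following the sketch in the introduction, is first to establish the Frobenius-semisimple isomorphism
\[ \imath\WD(r_{l,\imath}(\Pi)|_{G_{F_v}})^{\Fsemis}\cong \rec(\Pi_v\otimes|\det|^{(1-m)/2}) \]
under the additional assumption that $\Pi$ has both Shin-regular and extremely regular weight, and then to deduce both conclusions of the theorem in general by an $l$-adic interpolation argument in the style of Chenevier--Harris.

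For the Shin-regular extremely regular case, the key point is that the monodromy operator is insensitive to finite base change. First choose a solvable CM extension $F'/F$ such that $\BC_{F'/F}(\Pi)_{w'}$ has Iwahori-fixed vectors for every place $w'|v$. Then \cite{blggtlocalglobalI} computes $\WD(r_{l,\imath}(\Pi)|_{G_{F'_{w'}}})^{\Fsemis}$ completely; in particular it pins down the nilpotent operator $N$ on $V := r_{l,\imath}(\Pi)|_{G_{F_v}}$ and determines the restriction $r|_{W_{F_v}}^{\semis}$ up to unramified twists on each Weil-Deligne isotypic component. It therefore suffices to compute $\tr r(\sigma)$ on $V^{\Fsemis}$ for every $\sigma \in W_{F_v}$ of nonzero valuation. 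Given such a $\sigma$, I construct an imaginary CM field $E/F$ and a place $w$ of $E$ above $v$ with $\sigma \in W_{E_w}$ and with $\BC_{E/F}(\Pi)_w$ possessing Iwahori-fixed vectors; this is arranged by taking $E_w/F_v$ sufficiently ramified of the appropriate residue degree. The field $E/F$ need not be Galois, but its Galois closure over $F^+$ may be chosen solvable, so Harris's standard descent argument constructs $\BC_{E/F}(\Pi)$ once one knows that $r_{l',\imath'}(\Pi)$ is irreducible for some prime $l'$; the extreme regularity hypothesis supplies this via the main theorem of \cite{blggt}. Applying \cite{blggtlocalglobalI} to $\BC_{E/F}(\Pi)$ over $E_w$, combined with the compatibility of base change with local Langlands, then identifies $\tr r(\sigma)$ with the corresponding trace on $\rec(\Pi_v \otimes |\det|^{(1-m)/2})$ and completes this case.

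For the general case, I invoke an eigenvariety-theoretic interpolation argument following \cite{chenevierharris} and \cite{chenevier}, which is to be carried out in Section \ref{sec:Chenevier-Harris}. One realizes $(\Pi,\chi)$ as a specialization of a $p$-adic family of RAECSDC automorphic representations along which extremely regular Shin-regular weights occur densely, so that local-global compatibility at $v|l$ holds at those nearby classical points. Lower semicontinuity of monodromy under $l$-adic specialization, a manifestation of Grothendieck's monodromy theorem applied to the varying family of Weil-Deligne parameters, forces the relation $\prec$ in general; it upgrades to an equality when $\Pi$ itself has Shin-regular weight, since Shin-regularity excludes the only degenerations of monodromy blocks that could strictly enlarge $N$ upon specialization. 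The principal obstacle will be carrying out this interpolation carefully enough that the Weil-Deligne parameters at $v|l$ vary in a sufficiently strong sense for the dominance order $\prec$ to be preserved, and matching the trianguline/finite-slope data at $v$ with the Hecke parameters of $\Pi$; this is where most of the effort in Section \ref{sec:Chenevier-Harris} is spent.
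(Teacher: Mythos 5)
Your overall plan matches the paper's: prove the Frobenius-semisimple isomorphism in the extremely regular, Shin-regular case by a base change trick, and then interpolate in $l$-adic families to deduce $\prec$ in general and the isomorphism in the Shin-regular case. For the first half your argument is essentially that of the paper's Theorem \ref{thm:ext regular, sl regular case} (choosing a local extension containing $\sigma$ over which the base change is Iwahori-spherical, passing to its normal closure, and using irreducibility of $r_{l',\imath'}$ from Theorem 5.4.2 of \cite{blggt} together with Lemma 1.4 of \cite{blght} to descend to the intermediate non-Galois CM field $E$). One small technical caveat: when you propose first to ``pin down the nilpotent operator $N$'' via a solvable base change making $\BC_{F'/F}(\Pi)$ Iwahori-spherical, note that knowing $\WD(r)|_{W_{F'_{w'}}}$ and the traces of $r$ on $W_{F_v}$ does not by itself force the decomposition into $\Sp_m(s_i)$'s over $W_{F_v}$ (two inertial classes for $W_{F_v}$ can restrict to the same class over $W_{F'_{w'}}$, and then the $N$-blocks seen downstairs need not distribute uniquely upstairs). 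The paper sidesteps this by reducing immediately to the semisimple statement and then invoking Proposition 1.1 and Corollary 1.3 of \cite{blggtlocalglobalI}: once you have $\WD(r_{l,\imath}(\Pi)|_{G_{F_v}})^{\semis}\cong \rec(\Pi_v\otimes|\det|^{(1-m)/2})^{\semis}$, purity of $\WD(r_{l,\imath}(\Pi)|_{G_{F_v}})$ (which \cite{blggtlocalglobalI} establishes) determines the Frobenius-semisimplification.

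The genuine gap is in your justification of the equality in the Shin-regular case. You assert that the interpolation argument ``upgrades to an equality when $\Pi$ itself has Shin-regular weight, since Shin-regularity excludes the only degenerations of monodromy blocks that could strictly enlarge $N$ upon specialization.'' This is not the mechanism, and I don't see how one could make it one: the weight of $\Pi_\infty$ imposes no direct constraint on which degenerations of the Weil--Deligne parameter occur along an eigenvariety. What actually closes the gap is purity again: the interpolation yields $\imath\WD(r_{l,\imath}(\Pi)|_{G_{F_v}})^{\Fsemis}\prec \rec(\Pi_v\otimes|\det|^{(1-m)/2})$ (Theorem \ref{thm:general case}), and then, because $\rec(\Pi_v\otimes|\det|^{(1-m)/2})$ is pure and $\WD(r_{l,\imath}(\Pi)|_{G_{F_v}})$ is pure under the Shin-regularity hypothesis (Corollary 1.3 of \cite{blggtlocalglobalI}), Proposition 1.1 of \cite{blggtlocalglobalI} promotes $\prec$ to an isomorphism. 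This is exactly what Corollary \ref{cor: main result, Shin-regular case} does, and without it the $\cong$ statement of Theorem \ref{main-thm} does not follow from your sketch. You should also be more specific about how the $\prec$ relation itself emerges from the family: the paper obtains it by constructing a Bernstein-center-valued pseudocharacter $T^{\cB_u}$ at the place above $v$, proving $T^{Y,\tu}=f^*\circ\psi\circ T^{\cB_u}$ by Zariski density of the extremely regular Shin-regular locus, and then invoking the semicontinuity in the dominance order $\prec_I$ from Proposition 7.8.19(iii) of \cite{bellchenbk} — your appeal to ``lower semicontinuity of monodromy'' and ``Grothendieck's monodromy theorem'' is pointing in the right direction but needs to be pinned to these specific constructions to constitute a proof.
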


The following corollary follows immediately using base change as in Proposition 4.3.1 of \cite{cht}.
\begin{cor}
  \label{main-cor}
Let $(\Pi,\chi)$ be a RAESDC automorphic representation of
$GL_m(\A_F)$ and let $\imath:\Qlbar\iso\C$. If $v|l$ is a place of
$F$, then
\[ \imath\WD(r_{l,\imath}(\Pi)|_{G_{F_v}})^{\Fsemis}\prec
\rec(\Pi_v\otimes|\det|^{(1-m)/2}).\]
Furthermore, if $\Pi$ has Shin-regular weight, then 
\[ \imath\WD(r_{l,\imath}(\Pi)|_{G_{F_v}})^{\Fsemis}\cong
\rec(\Pi_v\otimes|\det|^{(1-m)/2}).\]
\end{cor}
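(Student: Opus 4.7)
The plan is to deduce Corollary \ref{main-cor} from Theorem \ref{main-thm} by a cyclic base change to a CM extension, exactly as in Proposition 4.3.1 of \cite{cht}. If $F$ is already imaginary CM then $(\Pi,\chi)$ is RAECSDC and there is nothing to do, so I assume $F$ is totally real. First I would choose an imaginary quadratic field $M/\Q$ such that $E := F \cdot M$ is CM, the place $v$ splits completely in $E/F$, and the base change $\Pi_E := \BC_{E/F}(\Pi)$ is cuspidal. The splitting condition is a condition on the quadratic character attached to $M$ and can be achieved by Chebotarev; cuspidality of $\Pi_E$ is equivalent to $\Pi \not\cong \Pi \otimes (\eta_{M/\Q} \circ \norm_{F/\Q} \circ \det)$, which excludes only finitely many $M$.

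Next I would verify that $(\Pi_E, \chi')$ is RAECSDC for the character $\chi' := \chi \circ \norm_{E^+/F^+}$. Cuspidality has been arranged; regularity and algebraicity are preserved under base change; and essential conjugate self-duality with respect to $\chi'$ follows from applying $\BC_{E/F}$ to the identity $\Pi^c \cong \Pi^\vee \otimes (\chi \circ \norm_{F/F^+} \circ \det)$, noting that over the CM field $E$ the complex conjugation in question becomes the nontrivial element of $\Gal(E/E^+)$. Moreover, if $\Pi$ has Shin-regular weight $a \in (\Z^m)^{\Hom(F,\C),+}$, then $\Pi_E$ has weight $a_E$ defined by $(a_E)_{\tau,i} = a_{\tau|_F, i}$: for $m$ odd Shin-regularity is automatic, and for $m$ even any $(\tau_0, i_0)$ witnessing Shin-regularity of $a$ (with $i_0$ odd and $a_{\tau_0, i_0} > a_{\tau_0, i_0 + 1}$) yields a witness for $a_E$ by picking any $\tau_0' : E \into \C$ extending $\tau_0$.

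Finally, let $w$ be a place of $E$ above $v$. Because $v$ splits completely in $E/F$, we have $E_w = F_v$, hence $G_{E_w} = G_{F_v}$; and by compatibility of base change with the local Langlands correspondence, $(\Pi_E)_w \cong \Pi_v$. Together with the Galois-theoretic compatibility $r_{l,\imath}(\Pi_E) \cong r_{l,\imath}(\Pi)|_{G_E}$, this gives
\[ \imath \WD(r_{l,\imath}(\Pi_E)|_{G_{E_w}})^{\Fsemis} = \imath \WD(r_{l,\imath}(\Pi)|_{G_{F_v}})^{\Fsemis} \]
and $\rec((\Pi_E)_w \otimes |\det|^{(1-m)/2}) = \rec(\Pi_v \otimes |\det|^{(1-m)/2})$. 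Applying Theorem \ref{main-thm} to $(\Pi_E, \chi')$ at the place $w$ then immediately yields both conclusions of the corollary.

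The only mildly subtle step is the verification that the base-changed pair $(\Pi_E, \chi')$ is RAECSDC in the precise sense of the paper, but this is exactly the content of Proposition 4.3.1 of \cite{cht}; every other step is a direct transport of structure through a split place.
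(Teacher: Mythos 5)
Your argument is correct and is precisely the argument the paper intends: the paper's proof consists of the single remark that the corollary ``follows immediately using base change as in Proposition 4.3.1 of \cite{cht},'' and your write-up is exactly that quadratic CM base change (with $v$ split, cuspidality and Shin-regularity preserved, and transport through the split place), spelled out. No gaps.
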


\section{The extremely regular, Shin-regular case}\label{sec:reducing to the
  semistable case}
\setcounter{subsection}{1}

We start by treating the special case where, thanks to the irreducibility results of \cite{blggt},  we can give a direct argument. We use an analogue of the trick of \cite{kisindefrings} and \cite{MR2538615} (see the proof of
Theorem 4.3 of \cite{kisindefrings} and Section 2.2 of
\cite{MR2538615}), but in a situation where we need to use a non-abelian, indeed non-Galois, base change. Because of this the argument makes essential use of the irreducibility results of \cite{blggt}, and hence at present can only be made in the extremely regular case.

\begin{thm}
  \label{thm:ext regular, sl regular case}
  Let $m\geq 2$ be an integer, $l$ a rational prime and $\imath:
  \Qlbar \isoto \bb C$.  Let $F$ be an imaginary CM field and $(\Pi,\chi)$ a RAECSDC
  automorphic representation of $GL_m(\A_F)$. If $\Pi$ has extremely
  regular and Shin-regular weight and $v|l$ is a place of $F$, then
\[ \imath\WD(r_{l,\imath}(\Pi)|_{G_{F_v}})^{\Fsemis} \cong \rec(\Pi_{v} \otimes |\det |^{(1-m)/2}). \] 
\end{thm}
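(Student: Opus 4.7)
The plan is to follow the approach sketched in the introduction, reducing via base change to the Iwahori-spherical case already handled in \cite{blggtlocalglobalI}. That paper establishes the desired local-global compatibility at places above $l$ precisely when $\Pi_v$ has Iwahori-fixed vectors. Since the monodromy operator in $\WD(r_{l,\imath}(\Pi)|_{G_{F_v}})$ is unchanged under finite base change of $F_v$ and $\rec$ is compatible with base change on the automorphic side, making a solvable extension $F'/F$ such that $\BC_{F'/F}(\Pi)_{v'}$ is Iwahori-spherical at every place $v'|v$ pins down both the monodromy and the restriction of the Weil group representation to $W_{F'_{v'}} \subset W_{F_v}$. It remains to verify that the two Frobenius-semisimple $W_{F_v}$-representations have matching traces on each $\sigma \in W_{F_v}$ that maps to a nonzero power of Frobenius in the Galois group of the residue field; a direct check shows that this suffices.

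The next step is, for each such $\sigma$, to construct an auxiliary CM extension $E/F$ together with a place $w|v$ of $E$ with the properties that $\sigma \in W_{E_w} \subset W_{F_v}$ and that $\BC_{E/F}(\Pi)_w$ has Iwahori-fixed vectors. Such $E$ will be produced by a standard globalization: first choose a local extension $E_w/F_v$ whose residue extension is small enough that $\sigma$ remains in $W_{E_w}$ while the ramification is large enough to make $\Pi_v$ Iwahori-spherical after restriction, and then realize it as the completion of a CM extension of $F$. If $E/F$ were Galois, applying \cite{blggtlocalglobalI} to $\BC_{E/F}(\Pi)$ together with the compatibility of base change with local Langlands would immediately yield the desired trace of $\sigma$.

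The principal obstacle is that $E/F$ typically cannot be chosen Galois, so $\BC_{E/F}(\Pi)$ is not directly produced by solvable base change; this is where the extremely regular weight hypothesis enters. The Galois closure of $E/F$ is solvable over $F$, so by the descent argument of Harris it is enough to know that for some auxiliary prime $l'$ and isomorphism $\imath'$ the global representation $r_{l',\imath'}(\Pi)$ is irreducible as a $G_F$-representation; this is exactly what the main irreducibility theorem of \cite{blggt} provides under the extremely regular weight assumption. Combining Harris's descent with the trace computation outlined above over each such $E$ produces the required trace identity at every $\sigma$ of nonzero valuation and hence establishes the theorem.
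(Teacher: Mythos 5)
Your proposal follows the paper's proof essentially step for step: reduce to matching the traces of the two Frobenius-semisimple representations on Weil elements $\sigma$ of nonzero valuation, globalize a local extension $E_v/F_v$ over which $\sigma$ survives in the Weil group and $\Pi_v$ becomes Iwahori-spherical, and handle the non-Galois extension $E/F$ by Harris's descent through its solvable normal closure, using the irreducibility results of \cite{blggt} (the paper also first twists to reduce to the RACSDC case, a routine preliminary). One cosmetic difference: for the monodromy operator you invoke its invariance under finite base change, as in the paper's introduction, whereas the written proof instead deduces the Frobenius-semisimple statement from the semisimplified one via purity (Proposition 1.1 and Corollary 1.3 of \cite{blggtlocalglobalI}); both routes are fine.

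One point needs correcting. For Harris's descent (Lemma 1.4 of \cite{blght}) one needs $r_{l',\imath'}(\Pi)|_{G_{F'}}$ to be irreducible, where $F'$ is the solvable Galois CM extension realizing the normal closure $E_v'/F_v$ at the places above $v$: the descent proceeds down a tower of cyclic extensions from $F'$ to $E$, and at each stage one needs the base change to remain cuspidal, which is what irreducibility of the restriction to $G_{F'}$ guarantees. Irreducibility of $r_{l',\imath'}(\Pi)$ as a $G_F$-representation, which is what you ask for, is strictly weaker and does not by itself make the descent go through. The repair is immediate and is what the paper does: extreme regularity of the weight is inherited by $\BC_{F'/F}(\Pi)$, so Theorem 5.4.2 of \cite{blggt} applies to $\Pi_{F'}$ itself and furnishes $l'$ and $\imath'$ with $r_{l',\imath'}(\Pi_{F'})$ irreducible.
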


\begin{proof}
  We first reduce to the RACSDC case:
  using Lemma 4.1.4 of \cite{cht} we choose an algebraic Hecke
  character $\psi : \A_F^\times/F^\times \to \C^\times$ such that
  $\psi\cdot (\psi\circ c) =\chi_F^{-1}\circ \norm_{F/F^+}$. Then $\Pi
  \otimes \psi \circ \det$ is RACSDC and the theorem holds for $\Pi$
  if and only if it holds for $\Pi \otimes \psi\circ \det$. We may
  therefore assume that $\Pi$ is RACSDC.
  
  To prove the theorem, it suffices to establish the weaker result that
  \[ \imath\WD(r_{l,\imath}(\Pi)|_{G_{F_v}})^{\semis} \cong
  \rec(\Pi_{v} \otimes |\det |^{(1-m)/2})^\semis. \]
  (Suppose this weaker result holds. By Proposition 1.1 of \cite{blggtlocalglobalI}, it suffices to prove that
  $\WD(r_{l,\imath}(\Pi)|_{G_{F_v}})$ is pure. This is established
  in Corollary 1.3 of \cite{blggtlocalglobalI}.)  

  To establish the weaker result, it suffices to show that
  \[ \tr(\sigma | \imath\WD(r_{l,\imath}(\Pi)|_{G_{F_v}})) = \tr (\sigma |
  \rec(\Pi_{v} \otimes |\det |^{(1-m)/2})) \] for every
  $\sigma \in W_{F_v}$ mapping to a non-zero power of $\Frob_v \in
  G_{k(v)}$. (This follows from the proof of Lemma 1 of \cite{saito}.) Fix such an element $\sigma \in W_{F_v}$. We can and do
  choose a finite extension $E_v/F_v$ inside $\barL_v$ such that
  \begin{itemize}
  \item $\sigma \in W_{E_v} \subset W_{F_v}$ and
  \item  $\BC_{E_v/F_v}(\Pi_{v})^{\Iw_{m,E_v}}\neq\{0\}$.
  \end{itemize}
  (If we write $\WD(r_{l,\imath}(\Pi)|_{G_{F_v}})=(V,r,N)$, we could
  take $E_v$ to be the fixed field of the subgroup of $W_{F_v}$
  generated by $\sigma$ and the kernel of $r|_{I_{F_v}}$.) Let
  $E_v'/E_v$ denote the normal closure of $E_v/F_v$. Choose a finite
  CM soluble Galois extension $F'/F$ such that for each place $w|v$ of
  $F'$, $F'_w/F_v \cong E_v'/F_v$. Let $\Pi_{F'} = \BC_{F'/F}(\Pi)$. By
  Theorem 5.4.2 of \cite{blggt} we can and do choose a rational prime
  $l'$ and $\imath':\Q_{l'}\isoto \C$ such that
  $r_{l',\imath'}(\Pi_{F'})$ is irreducible. Choose a prime $w|v$ of $F'$
  and an $F_v$-embedding $F'_w \into \barF_v$. Let $E=F'\cap E_v \subset F'_w$ be the
  fixed field of $\Gal(F'_w/E_v)\subset \Gal(F'/F)$. The inclusion
  $E\into E_v$ determines a prime $u$ of $E$. By Lemma 1.4 of
  \cite{blght}, there exists a RACSDC automorphic representation
  $\Pi_E$ of $GL_m(\A_E)$ with $r_{l',\imath'}(\Pi_E)\cong
  r_{l',\imath'}(\Pi)|_{G_E}$ and hence $r_{l,\imath}(\Pi_E)\cong
  r_{l,\imath}(\Pi)|_{G_E}$. Since $\Pi_{E,u}^{\Iw_{m,u}}\neq \{0\}$,
  Theorem 1.2 of \cite{blggtlocalglobalI} implies that
  \begin{eqnarray*}
    \tr(\sigma| \imath\WD(r_{l,\imath}(\Pi)|_{G_{F_v}})) 
   &=& \tr(\sigma| \imath\WD(r_{l,\imath}(\Pi_E)|_{G_{E_u}})) \\
   &=& \tr(\sigma| \rec(\Pi_{E,u}\otimes|\det|^{(1-m)/2} )) \\
   &=& \tr(\sigma| \rec(\Pi_{v}\otimes|\det|^{(1-m)/2} )) ,
  \end{eqnarray*}
and the result follows.
\end{proof}

\section{The general case}\label{sec:Chenevier-Harris}
\setcounter{subsection}{1} 
We will prove the next result using Theorem \ref{thm:ext regular, sl
  regular case} and the methods of \cite{chenevier} and
\cite{bellchenbk}. It establishes the first statement of Theorem \ref{main-thm}.

\begin{thm}
  \label{thm:general case}
  Let $m\geq 2$ be an integer, $l$ a rational prime and $\imath:
  \Qlbar \iso \bb C$.  Let $F$ be an imaginary CM field and $(\Pi,\chi)$ a RAECSDC
  automorphic representation of $GL_m(\A_F)$. If $v|l$ is a place of $F$, then
\[ \imath\WD(r_{l,\imath}(\Pi)|_{G_{F_v}})^{\Fsemis}\prec
\rec(\Pi_{v} \otimes |\det |^{(1-m)/2}). \]
\end{thm}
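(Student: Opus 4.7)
The plan is to deduce Theorem~\ref{thm:general case} from Theorem~\ref{thm:ext regular, sl regular case} by an $l$-adic interpolation argument on an eigenvariety, following Chenevier~\cite{chenevier} and Chenevier-Harris~\cite{chenevierharris}. First, as in the proof of Theorem~\ref{thm:ext regular, sl regular case}, a twist by an algebraic Hecke character reduces us to the RACSDC case. I would then transfer $\Pi$ via Labesse's descent and Jacquet-Langlands to an automorphic representation $\pi$ on a definite unitary group in $m$ variables over $F^+$, and, after choosing an accessible refinement at each place of $F^+$ above $l$, realize $\pi$ as a classical point $x_\Pi$ on the Chenevier eigenvariety $\mathcal{E}$ for this group.

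The key geometric input is to exhibit a set $\mathcal{Z}\subset\mathcal{E}$ of classical points accumulating at $x_\Pi$ whose weights are simultaneously extremely regular and Shin-regular. Each of these two genericity hypotheses excludes only a countable union of proper closed subvarieties of the weight space, and the standard accumulation property of classical points at non-critical-slope refinements (see e.g.~\cite{bellchenbk}) then produces such a $\mathcal{Z}$. At every $x\in\mathcal{Z}$, Theorem~\ref{thm:ext regular, sl regular case} applies and delivers the full local-global compatibility $\imath\WD(r_{l,\imath}(\pi_x)|_{G_{F_v}})^{\Fsemis}\cong \rec(\pi_{x,v}\otimes|\det|^{(1-m)/2})$.

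Finally, I would transport this compatibility to $x_\Pi$ via the family of Galois representations carried by $\mathcal{E}$. The traces of elements of $W_{F_v}$ on the Galois representations interpolate rigid-analytically on $\mathcal{E}$, so continuity from the dense good locus $\mathcal{Z}$ already gives $\imath\WD(r_{l,\imath}(\Pi)|_{G_{F_v}})^{\semis}\cong\rec(\Pi_v\otimes|\det|^{(1-m)/2})^{\semis}$. The multiplicities $(m_i(\rho,\sigma))$ controlling the monodromy on each isotypic component $V[\sigma]$, $\sigma\in\cJ$, are however only lower-semicontinuous under specialization in an $l$-adic family -- Jordan blocks of $N$ can split but never merge under specialization -- and this semicontinuity, paired with the corresponding specialization behaviour of the local automorphic components $\pi_{x,v}$ along the family, is exactly what produces the dominance inequality $\prec$ of Section~\ref{sec:irreducibility} in the direction stated. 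The principal technical obstacle I foresee is the careful matching of the refined Frobenius eigenvalues interpolated along $\mathcal{E}$ with the $\sigma\in\cJ$-isotypic decomposition of the Weil-Deligne parameter at $x_\Pi$, which is precisely where the refined frameworks of \cite{chenevier}, \cite{bellchenbk} and \cite{chenevierharris} are indispensable.
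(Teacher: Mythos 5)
Your proposal follows the same broad strategy as the paper — twist to the RACSDC case, transfer $\Pi$ via Labesse to a definite unitary group, build a Chenevier eigenvariety, find a Zariski-dense accumulation set of classical points with extremely regular \emph{and} Shin-regular weight, apply Theorem~\ref{thm:ext regular, sl regular case} there, and interpolate back. That much is right, and the recovery of the semisimple statement by interpolating traces of elements of $W_{F_v}$ is also the paper's argument.

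However, the deduction of the dominance relation $\prec$ — which is the actual content of Theorem~\ref{thm:general case} beyond the semisimple statement — is not handled correctly in your sketch, and this is where I see a genuine gap. Your stated mechanism is that the monodromy multiplicities are ``lower-semicontinuous under specialization in an $l$-adic family — Jordan blocks of $N$ can split but never merge under specialization.'' This is a statement about the fibrewise monodromy of the family $\WD_\tu$ constructed from the Berger--Colmez $D_\st$ of the carried Galois module $M$; but that $N$ is a global section of $\End_{\cO_Y}(\WD_\tu)$, so if it vanishes on a Zariski-dense set then it vanishes identically, yet $\rec(\Pi_v)$ can have nontrivial monodromy. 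The reason this is not a contradiction is that $M_{y_0}$ is \emph{not} $r_{l,\imath}(\Pi)|_{G_{F_v}}$ — only its semisimplification is — and the semisimplification step changes the monodromy. So one cannot argue directly by specializing $N$ along the family; the family's $N$ at $y_0$ simply has no prescribed relation to the $N$ you care about. What is really needed, and what the paper supplies, is first a structural constraint upstream: at each place $\tu\mid v$ one must choose an idempotent $e_u$ cutting out a piece of the Bernstein component $\cB_u$ of $\Pi_{F,\tu}$ with the property that \emph{every} irreducible $\pi$ with $e_u\pi\neq 0$ already satisfies $\rec(\pi)\prec_I\rec(\Pi_{F,\tu})$. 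This forces the inequality $\imath\WD(M_y^{\semis}|_{G_{F_{\tu}}})^{\Fsemis}\prec_I \rec(\Pi_{F,\tu}\otimes|\det|^{(1-m)/2})$ to hold at every point of the good locus, and then the fact that $\prec_I$ propagates from a Zariski-dense set of points to all points is a genuinely delicate result — the proof of Proposition 7.8.19(iii) of \cite{bellchenbk} — rather than the routine semicontinuity of matrix ranks. Finally, one passes from $\prec_I$ to $\prec$ via Lemma 3.14(ii) of \cite{chenevier}, which uses the semisimple compatibility already established. Your sketch contains none of these three steps (the $e_u$ constraint, the Bellaiche--Chenevier proposition, and the $\prec_I\Rightarrow\prec$ reduction), and without the first one in particular the eigenvariety simply carries no \emph{a priori} bound comparing its classical points against $\Pi_v$.

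A smaller omission: before Labesse descent one needs a definite unitary group $U/F^+$ quasi-split at every finite place with $U\times_{F^+}F\cong GL_m$, which generally requires a preliminary solvable base change to arrange $[F^+:\Q]$ even, $F/F^+$ unramified at all finite places, and further local conditions. This step is routine but should be flagged since Theorem~\ref{thm:general case} makes no such hypothesis on $F$.
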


Before giving the proof, we first deduce the second statement of
Theorem \ref{main-thm} as a corollary.

\begin{cor}
  \label{cor: main result, Shin-regular case}
Let $m\geq 2$ be an integer, $l$ a rational prime and $\imath:
  \Qlbar \iso \bb C$.  Let $F$ be an imaginary CM field and $(\Pi,\chi)$ a RAECSDC
  automorphic representation of $GL_m(\A)$. If $\Pi$ has slightly
  regular weight and $v|l$ is a place of $F$, then
\[ \imath\WD(r_{l,\imath}(\Pi)|_{G_{F_v}})^{\Fsemis} \cong \rec(\Pi_{F,v} \otimes |\det |^{(1-m)/2}). \] 
\end{cor}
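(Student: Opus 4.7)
The plan is to promote the conclusion of Theorem \ref{thm:general case} from the relation $\prec$ to an isomorphism, using purity; the strategy mirrors the final step in the proof of Theorem \ref{thm:ext regular, sl regular case}.

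By Theorem \ref{thm:general case} we have
\[ \imath\WD(r_{l,\imath}(\Pi)|_{G_{F_v}})^{\Fsemis}\prec \rec(\Pi_{v} \otimes |\det|^{(1-m)/2}), \]
so in particular the two Frobenius-semisimple Weil-Deligne representations have isomorphic ordinary semisimplifications. It remains to show that for every $\sigma \in \cJ$ the partitions $\{m_i(-,\sigma)\}$ agree on both sides, equivalently that the two representations have the same $\Sp$-block decomposition. I would deduce this from purity of both sides.

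Purity of the left-hand side is exactly Corollary 1.3 of \cite{blggtlocalglobalI}, after an initial twist by an algebraic Hecke character reducing $\Pi$ to the RACSDC case in the manner of the opening paragraph of the proof of Theorem \ref{thm:ext regular, sl regular case}. Purity of the right-hand side is where the Shin-regular hypothesis is essential: by the main results of Shin (with input from Caraiani), Shin-regularity of the weight forces $\Pi$ to be tempered at every finite place of $F$, and temperedness of $\Pi_v$ is equivalent under the local Langlands correspondence to purity of $\rec(\Pi_v\otimes|\det|^{(1-m)/2})$. With both sides now known to be Frobenius-semisimple and pure with the same ordinary semisimplification, a standard uniqueness result for pure Weil-Deligne representations (Proposition 1.1 of \cite{blggtlocalglobalI}, already invoked in the proof of Theorem \ref{thm:ext regular, sl regular case}) pins down the $\Sp$-block decomposition and yields the claimed isomorphism. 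The main obstacle, and the reason the Shin-regular weight hypothesis is needed, is the temperedness input ensuring purity of the right-hand side; once that is in hand, the passage from Theorem \ref{thm:general case} is essentially formal.
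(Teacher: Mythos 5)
Your proof is correct and is in substance the paper's own argument: the corollary is deduced from Theorem \ref{thm:general case} together with Corollary 1.3 and Proposition 1.1 of \cite{blggtlocalglobalI}. Two remarks. First, your step establishing purity of the right-hand side is superfluous: Proposition 1.1 of \cite{blggtlocalglobalI} needs only the full relation $\prec$ supplied by Theorem \ref{thm:general case} (not merely agreement of semisimplifications) together with purity of the Galois side, because a pure Weil--Deligne representation is maximal for $\prec$ among those with a given semisimplification; combined with $A\prec B$ this already forces $A\cong B$. Second, and more substantively, you have the role of the Shin-regular hypothesis backwards. Temperedness of $\Pi_v$ at every finite place, hence purity of $\rec(\Pi_v\otimes|\det|^{(1-m)/2})$, is known for \emph{all} RACSDC $\Pi$ (this is Caraiani's strengthening \cite{ana} of Shin's results, with no regularity hypothesis beyond ``regular algebraic''). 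What genuinely requires Shin-regularity is purity of $\WD(r_{l,\imath}(\Pi)|_{G_{F_v}})$, i.e.\ Corollary 1.3 of \cite{blggtlocalglobalI}: that result rests on realizing the Galois representation directly in the cohomology of a Shimura variety, which is available precisely in the Shin-regular case. Indeed, if the hypothesis were needed only on the automorphic side, Theorem \ref{main-thm} would give the isomorphism unconditionally, which the paper does not claim. Since the corollary assumes Shin-regular weight, every citation you invoke is legitimately applicable and your proof goes through; but the explanation of ``where the hypothesis is needed'' should be corrected.
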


\begin{proof}
  This follows immediately from Theorem \ref{thm:general case} together with Corollary
  1.3 of \cite{blggtlocalglobalI} and Proposition 1.1 of \cite{blggtlocalglobalI}.
\end{proof}

\begin{proof}[Proof of Theorem \ref{thm:general case}]
  As in the proof of Theorem \ref{thm:ext regular, sl regular case},
  we may assume that $\Pi$ is RACSDC. Replacing $F$ by a suitable finite soluble CM Galois extension in which $v$ splits we may also assume that:
  \begin{itemize}
  \item $[F^+:\Q]$ is even;
  \item $F/F^+$ is unramified at all finite places;
 \item all places of $F^+$ dividing $l$ are split in $F$;
  \item if $\Pi_w$ is ramified, then $w|_{F^+}$ is split in $F$;
  \item if $w\neq v$ then $\Pi_{u}^{\Iw_{m,w}}\neq\{0\}$.
  \end{itemize}

Since $[F^+:\Q]$ is even,we can and do
choose a unitary group $U/F^+$ such that:
\begin{itemize}
\item $U\times_{F^+}F \cong GL_m/F$;
\item $U\times_{F^+}F^+_u$ is quasi-split for each prime $u$ of $F^+$;
\item $U(F^+_\sigma)$ is compact for each $\sigma : F^+\into \R$.
\end{itemize}
(We write $F^+_\sigma$ for the completion of $F^+$ with respect to the absolute value induced by $\sigma$.) 
For each place $u$ of $F^+$ which splits in $F$ and $w|u$ a prime of $F$ , we fix
an isomorphism $\imath_w : U(F^+_u)\isoto GL_m(F_w)$ such that
$\imath_{w^c}={}^t\imath_w^{-c}$. If $\sigma : F^+\into \R$ and
$\tsigma:F\into \C$ extends $\sigma$, we fix an embedding
$\imath_{\tsigma}:U(F^+_\sigma)\into GL_n(F_{\tsigma})$ which
identifies $U(F^+_\sigma)$ with the set of all $g$ with ${}^tg^c\cdot g=1_m$.
By Corollaire 5.3 and Th\'eor\`eme 5.4 of \cite{labesse}, there
exists an automorphic representation $\pi_F$ of $U(\A_{F^+})$ such
that:
\begin{itemize}
\item if $u$ is a prime of $F^+$ which splits as $ww^c$ in $F$, then
  $\pi_{F,u}\cong \Pi_{F,w}\circ \imath_w$;
\item if $u$ is a prime of $F^+$ which is inert in $F$, then
  $\Pi_{F,u}$ is given by the local base change of $\pi_{F,u}$ (see \cite{labesse});
\item if $\sigma : F^+\into \R$ and $\tsigma:F\into \C$ extends
  $\sigma$, then there is an irreducible algebraic representation
  $W_{\tsigma}$ of $GL_m(F_{\tsigma})$ such that $\pi_{F,\sigma}\cong
    W_{\tsigma}^{\vee}\circ \imath_{\tsigma}$. Moreover, if
    $W_{\tsigma}$ has highest weight
    $a_{\tsigma}=(a_{\tsigma,1},\ldots,a_{\tsigma,m})$, then $\Pi_F$ has weight $a=(a_{\tsigma})_{\tsigma:F\into\C}$.
\end{itemize}

We now follow the arguments of \cite{chenevier}. We have chosen
to closely follow \cite{chenevier} even when we could somewhat
simplify the argument in the case of interest to us, in order to ease
comparison with that paper. We note however we take the prime $p$ of
\cite{chenevier} to be the prime $l$ of this paper.
Make the
following definitions: let $\tS_l$
(resp.\ $\tS_v$) denote the set of primes of $F$ dividing $l$ but not
lying above $v$ (resp.\ lying above $v$). Let $\tR$ denote the set of
primes $w$ of $F$ not dividing $l$ and with $\Pi_{F,w}$ ramified. Set
$\tS = \tS_v \cup \tR$. Let $S_l$, $S_v$, $R$ and $S$ denote the sets
of primes of $F^+$ lying under $\tS_l$, $\tS_v$, $\tR$ and $\tS$
respectively. For each $u \in S_l\cup S$, fix a prime $\tu$ of $F$
dividing $u$. We will henceforth identify $U(F^+_u)$ and
$GL_m(F_{\tu})$ via $\imath_{\tu}$ for $u \in S_l\cup S$.

Fix embeddings $\imath_\infty : \Qbar \into \C$ and $\imath_l : \Qbar
\into \Qlbar$ such that $\imath\circ\imath_l = \imath_\infty$. For $u|l$ a prime of $F^+$, following
\cite{chenevier}, we let $\Sigma(u)\subset
\Hom(F^+,\Qlbar)$ denote the set of embeddings inducing $u$ and let
$\Sigma_{\infty}(u)=\imath\Sigma(u)\subset
\Hom(F^+,\R)$. Let $W_{\infty}$ denote the representation
$\otimes_{u\in S_v, \sigma \in \Sigma_{\infty}(u)}\pi_{F,\sigma}$ of $\prod_{u\in S_v, \sigma \in \Sigma_{\infty}(u)}
U(F^+_\sigma)$.

Let $K^S=\prod_{u\not\in S}K_u \subset U(\A^{\infty,S}_{F^+})$ be a
compact open subgroup with
\begin{itemize}
\item $K_u = \Iw_{m,\tu}$ if $u \in S_l$;
\item $K_u$ a hyperspecial maximal compact subgroup of $U(F^+_u)$ otherwise.
\end{itemize}
Let $\cH^{S\cup S_l}=\Z[U(\A_{F^+}^{\infty,S\cup S_l})//K^{S\cup S_p}]$ denote the commutative spherical Hecke algebra. For $u \not \in
S\cup S_l$, let $e_u \in \cH(U(F^+_u))$ be the idempotent
corresponding to $K_u$.

Choose a finite Galois extension $E/\Q$ in $\Qbar$ such that $\Pi_{F,\tu}$ can
be defined over $E$ for each $u \in S$. For $u\in S$, let $\cB_u$
denote the subcateogry of the category of smooth
$E$-representations of $GL_m(F_{\tu})$ determined by the supercuspidal
support of $\Pi_{F,\tu}$ (see Proposition-d\`efinition 2.8 of \cite{bern}).  Let $\gz_u$ denote the center of the
category $\cB_u$. For $u \in R$, let $e_u$ denote the idempotent in
$\cH(GL_m(F_{\tu}))$ corresponding to $\Iw_{m,\tu}$. For $u \in S_v$, choose
an idempotent $e_u$ in $\cH(GL_m(F_{\tu}))$ such that
\begin{itemize}
\item $b_ue_u=e_u$ where $b_u\in \cH(GL_m(F_{\tu}))$ is the projector to $\cB_u$;
\item $e_u \Pi_{F,\tu}\neq \{0\}$;
\item for every irreducible $\pi \in \cB_u\otimes_{E,\imath_{\infty}} \C$, if $e_u \pi \neq \{0\}$,
  then $$\rec(\pi)\prec_{I}\rec(\Pi_{F,\tu}).$$
(We refer to Section 3.10 of \cite{chenevier} for the definition of
$\prec_I$ and to Section 3.6 of {\it op.\ cit.\ } for the fact that
one can choose such an idempotent $e_u$.)
\end{itemize}
Extending $E$ if necessary, we may assume that $e_u$ is defined over
$E$ for each $u\in S$ and we set $e = \otimes'_{u \not \in S_l}e_u$ and 
\[ \cH = \cH^{S\cup S_l}\otimes_{\bb Z}  (\bigotimes_{u \in
    S}{}_E  \,\,\, \gz_u ). \]

Let $L_E$ denote the Galois closure (over $\Q_l$) of the closure of $\imath_l(EF)$ in
$\Qlbar$. Let $T$ denote the diagonal maximal torus in $\prod_{u \in S_l}GL_m(F_{\tu})$
and let $\cT = \Hom(T,\G_m^\rig)$ denote the rigid analytic space over
$\Q_l$ parametrizing continuous $l$-adic characters of $T$. 

Let $\cA$ denote the set of automorphic representations $\pi$ of
$U(\A_{F^+})$ for which $e(\pi^{\infty})^{K_{S_l}}\neq \{0\}$ and
$\otimes_{\sigma \in \Sigma_{\infty}(u), u\in S_v}\pi_{\sigma}\cong
W_\infty$. If $\pi \in \cA$, then $\cH$ acts on $e(\pi^{\infty,S_l})$
through an $E$-algebra homomorphism $\psi_{\C}(\pi):\cH \ra \C$ (this
follows from the fact that $\pi_u^{K_u}$ is 1-dimensional for $u\not
\in S\cup S_l$ while $\gz_u$ acts on $\pi_u$ through a character for
$u\in S$). We define $\psi(\pi):\cH\ra \Qlbar$ to be $\imath^{-1}\circ \psi_\C(\pi)$.

If $\pi \in \cA$, we associate to it an algebraic character $\kappa(\pi) \in
\cT(L_E)$ as in Section 1.4 of \cite{chenevier} (this character records
the highest weights of the representations $\pi_{\sigma}$ for $\sigma
\in \Sigma_\infty(u)$ and $u\in S_l$). If $u \in S_l$ and
$\pi_{\tu}$ is an irreducible smooth representation of $GL_m(F_{\tu})$
with $\pi_{\tu}^{\Iw_{m,\tu}}\neq \{0\}$, an \emph{accessible refinement}
of 
$\pi_{\tu}$ is an unramified character
$\chi_{\tu}:T_m(F_{\tu})\ra\C^\times$such that $\pi_{\tu}$ embeds as
a subrepresentation of
$\nind_{B_m(F_{\tu})}^{GL_m(F_{\tu})}\chi_{\tu}$. (Such a character
always exists.) If $\pi \in \cA$,
then an accessible refinement of $\pi$ is a character $\chi = \prod_{u\in
  S_l}\chi_{\tu} : T=\prod_{u\in S_l}T_m(F_{\tu}) \ra \Qlbar^\times$
where each
$\chi_{\tu}:T_m(F_{\tu})\ra\Qlbar^\times$ is unramified and $\imath
\chi_{\tu}$ is an accessible refinement of
$\pi_{\tu}\otimes|\det|^{(1-m)/2}$. Given such a pair $(\pi,\chi)$, 
we associate to it the character 
$\nu(\pi,\chi) \in \cT(\Qlbar)$ as in Section 1.4 of
\cite{chenevier}.

We let 
\[ \cZ \subset \Hom_E(\cH,\Qlbar)\times \cT(\Qlbar) \]
denote the set of all pairs $(\psi(\pi),\nu(\pi,\chi))$ where
$\pi\in\cA$ and $\chi$ is an accessible refinement of $\pi$.

 By Th\'eor\`eme 1.6 of \cite{chenevier}, the data
$(S_l,W_{\infty},\cH,e)$ determines a four-tuple $(X,\psi,\nu,Z)$ where:
\begin{itemize}
\item $X$ is a reduced rigid analytic space over $L_E$ which is
  equidimensional of dimension $n\sum_{u \in S_l}[F^+_u:\Q_l]$;
\item $\psi : \cH \ra \cO(X)$ is a ring homomorphism with
  $\psi(\cH^{S\cup S^l})\subset \cO(X)^{\leq 1}$;
\item $\nu :X\ra \cT$ is a finite analytic morphism;
\item $Z\subset X(\Qlbar)$ is a Zariski-dense accumulation subset of
  $X(\Qlbar)$ such that the map
\[ X(\Qlbar) \ra \Hom_E(\cH, \Qlbar) \times \cT(\Qlbar) \]
which sends $x \mapsto (h\mapsto (\psi(h)(x), \nu(x)))$ induces a bijection $Z
\isoto \cZ$. (We refer to Section 1.5 of \cite{chenevier} for the definition of
`Zariski-dense accumulation'.) We henceforth identify $Z$ and $\cZ$.
\end{itemize}

If $\pi\in \cA$, then by Corollaire 5.3 of \cite{labesse} there exists
a partition $m=m_1+\ldots+m_r$ of $m$ and conjugate self-dual discrete automorphic
representations $\Pi_i$ of $GL_{m_i}(\A_F)$ such that
$\Pi=\Pi_1\boxplus\dots\boxplus\Pi_r$ is a strong base change of
$\pi$. Let $\Sigma = \tS\cup\tS_l$ and let $F_\Sigma$ denote the
maximal extension of $F$ which is unramified outside $\Sigma$.
Let $G_{F,\Sigma}=\Gal(F_\Sigma/F)$. By Theorem 3.2.5 of \cite{chenevierharris} and the argument of
Theorem 2.3 of \cite{guerberoff2009modularity}, there is a continuous semisimple
representation $r_{l,\imath}(\pi):G_{F,\Sigma} \ra GL_m(\Qlbar)$ with 
\[ \imath\WD(r_{l,\imath}(\pi)|_{G_{F_w}})^{\semis}\cong
\rec(\Pi_w\otimes |\det|^{(1-m)/2})^{\semis}\]
for each prime $w\nmid l$ of $F$. Moreover,
there is a unique continuous $m$-dimensional pseudo-representation
$T:G_{F,\Sigma}\ra \cO(X)$ such that $T_z =
\tr(r_{l,\imath}(\pi))$ for each $z=(\psi(\pi),\nu(\pi,\chi)) \in
Z$. (Here, for any $x\in X(\Qlbar)$, $T_x$ denotes the composition of $T$ with
the evaluation map $\cO(X)\ra\Qlbar; g\mapsto g(x)$.) The existence of
$T$ follows from the proof of Proposition 7.1.1 of \cite{chenevier2}
together with Proposition
7.2.11 of \cite{bellchenbk} (which shows that $\cO(X)^{\leq 1}$ is
compact, as $\cT$ is nested and $\nu$ is finite) and the fact that $\psi(\cH^{S\cup
  S^l})\subset \cO(X)^{\leq 1}$. By Theorem 1 of \cite{taylor-siegel}, for any $x\in X(\Qlbar)$, there is
a unique continuous semisimple representation $r_x : G_{F,\Sigma}\ra GL_m(\Qlbar)$ with $T_x=\tr(r_x)$.

Now, let $u\in S$. By Proposition 3.11 of \cite{chenevier}, there is a unique
$m$-dimensional pseudo-character 
\[ T^{\cB_u} : W_{F_{\tu}}\ra \gz_u \]
such that for each irreducible smooth representation $\pi_{\tu}$ of
$GL_m(F_{\tu})$ in $\cB_u\otimes_{E,\imath_{\infty}} \C$, if $T^{\cB_u}_{\pi_{\tu}}$ denotes the
composition of $T^{\cB_u}$ with the character $\gz_u\ra\C$  giving the action of $\gz_u$ on $\pi_{\tu}$, then
\[ T^{\cB_u}_{\pi_{\tu}} = \tr(\rec(\pi_{\tu}\otimes|\det|^{(1-m)/2})).\]

Let $z_0 \in Z$ be a point corresponding to $\pi_F$ together with the
choice of some accessible refinement. Let $Z^\reg\subset Z$ denote the
subset associated to pairs $(\pi,\chi)$ where $\pi_\infty$ is slightly
regular and extremely regular. (If $\tsigma : F \into \C$ and $\sigma:
=\tsigma|_{F^+}$, then $\pi_{\sigma}\circ \imath_{\tsigma}$ is the
restriction of an irreducible algebraic representation of
$GL_m(F_{\tsigma})$ of highest weight $b_{\tsigma}$, say. We say
$\pi_{\infty}$ is Shin-regular or extremely regular if
$b:=(b_{\tsigma})_{\tsigma}$ has the corresponding property.)  Then
$Z^\reg$ is a Zariski-dense accumulation subset of $X(\Qlbar)$. Choose
an open affinoid $\Omega\subset X$ such that $z_0\in \Omega$ and
$Z^{\reg}\cap \Omega$ is Zariski-dense in $\Omega$. Let $T_{\Omega}$
denote the restriction of $T$ to $\Omega$. By Lemme 7.8.11 of
\cite{bellchenbk}, there exists a reduced, separated, quasi-compact
rigid analytic space $Y$ and a proper, generically finite, surjective morphism $f: Y\ra
\Omega$ such that there exists an $\cO_Y$-module $M$ which is locally
free of rank $n$ and carries a continuous action of $G_{F,\Sigma}$
whose trace is given by $f^* T_{\Omega}$.

By Proposition 3.16 of \cite{chenevier} (a result of Sen), the
(generalized) Hodge-Tate weights of $M_y|_{G_{F_{\tu}}}$ are
independent of $y\in Y(\Qlbar)$. (This follows from the quoted result and the fact that the
Hodge-Tate weights of $r_z|_{G_{F_{\tu}}}$ are independent of $z\in
Z$.) Moreover, by the improvement to Theorem C of \cite{bercol} made in
Corollary 3.19 of \cite{chenevier}, there exists a finite Galois extension
$F'_{\tu}/F_{\tu}$ such that if $F'_{\tu,0}\subset F'_{\tu}$ denotes
the maximal unramified extension of $\Q_l$, then the
$\cO_Y\otimes_{\Q_l}F'_{\tu,0}$-module
\[ D_{\st}^{F'_{\tu}}(M):= (M\otimes_{\Q_l}B_{\st})^{G_{F'_{\tu}}} \] 
is locally free of rank $m$ and satisfies the following:
if $y\in Y(\Qlbar)$, then the natural map $D_{\st}^{F'_{\tu}}(M)_y \ra
  D_{\st}^{F'_{\tu}}(M_y)$ is an isomorphism (and hence $M_y|_{G_{F'_{\tu}}}$ is semistable).

The diagonal action of $G_{F_{\tu}}$ on $M\otimes_{\Q_l}B_{\st}$
induces an $\cO_Y$-linear, $F'_{\tu,0}$-semilinear action of
$G_{F_{\tu}}$ on $D^{F'_{\tu}}_\st(M)$. We define an
$\cO_Y\otimes_{\Q_l}F'_{\tu,0}$-linear action $r_{\tu}$ of
$W_{F_{\tu}}\subset G_{F_{\tu}}$ on $D^{F'_{\tu}}_\st(M)$ by letting
$g \in W_{F_{\tu}}$ act as $g\circ \varphi^{w(g)}$ where $w(g)\in\Z$
is the power of $\Frob_l$ to which $g$
maps in $G_{F_{\tu}}/I_{F_{\tu}}$. We have that $N\circ
r_{\tu}(g)=l^{w(g)}r_{\tu}(g)\circ N$ on $D^{F'_{\tu}}_{\st}(M)$. For
each continuous embedding $\tau : F'_{\tu,0}\into L_E$, we let
\[ \WD_{\tu,\tau} =
D_\st^{F'_{\tu}}(M)\otimes_{\cO_Y\otimes_{\Q_p}F'_{\tu},1\otimes
  \tau}\cO_Y.\] Then $\WD_{\tu,\tau}$ is locally free of rank $m$ as
an $\cO_Y$-module and $N\circ r_{\tu}(g)=l^{w(g)}r_{\tu}(g)\circ N$ on
$\WD_{\tu,\tau}$. Moreover, $\varphi$ induces an isomorphism
$\WD_{\tu,\tau\circ\Frob_l}\isoto \WD_{\tu,\tau}$ compatible with
$r_{\tu}$ and $N$. We let $\WD_{\tu}$ denote $\WD_{\tu,\tau}$ for some
choice of $\tau$, regarded as a $W_{F_{\tu}}$-module with an operator
$N$. We note that for each $y\in Y(\Qlbar)$, $\WD_{\tu,y}$ is the
Weil-Deligne representation associated to $M_y|_{G_{F_{\tu}}}$. It
follows that $N^m=0$ on $\WD_{\tu}$.  Let
\[ T^{Y,\tu}=\tr(r_{\tu}(\cdot)|\WD_{\tu}): W_{F_{\tu}} \ra \cO_Y. \]
We claim that
\[ T^{Y,\tu} = f^*\circ \psi \circ T^{\cB_u}. \]
This is proved as follows: let $y \in f^{-1}(Z^\reg\cap\Omega)$ and
let $z=f(y)$. Then $z$ corresponds to a pair $(\pi,\chi)$ where $\pi\in\cA$
is Shin-regular and extremely regular (and $\chi$ is an accessible refinement of
$\pi$). Theorem \ref{thm:ext regular, sl regular case} together with the regularity conditions satisfied by $\pi$ and the
construction of the representation $r_{l,\imath}(\pi)$ in the proof of
Theorem 2.3 of \cite{guerberoff2009modularity} show that
\[ \WD(r_{l,\imath}(\pi)|_{G_{F_{\tu}}})^{\Fsemis}\cong
\imath^{-1}\rec(\pi_u\circ \imath_{\tu}^{-1}\otimes |\det|^{(1-m)/2}).\]
Since $M_y^{\semis}\cong r_z=r_{l,\imath}(\pi)$, we deduce that
$T^{Y,\tu}(g)$ and $f^*(\psi(T^{\cB,u}(g)))$ agree on $y\in Y(\Qlbar)$
for each $g\in W_{F_{\tu}}$. The claimed result now follows from the
Zariski-density of $f^{-1}(Z^\reg\cap\Omega)$ in $Y$.

We now choose some $y_0\in Y(\Qlbar)$ with $f(y_0)=z_0$. Since
$r_{l,\imath}(\Pi_F)=r_{l,\imath}(\pi_F)=r_{z_0}\cong M_{y_0}^{\semis}$, the result just
proved shows that
\[ \imath\WD(r_{l,\imath}(\Pi_F)|_{G_{F_{\tu}}})^{\semis}\cong
\rec(\Pi_{F,\tu}\otimes|\det|^{(1-m)/2}).\]
We deduce from
this that 
\[ \imath\WD(r_{l,\imath}(\Pi_F)|_{G_{F_{\tu}}})^{\Fsemis}\prec
\rec(\Pi_{F,\tu}\otimes|\det|^{(1-m)/2}),\]
as follows: By Lemma 3.14(ii) of \cite{chenevier}, it suffices to
show that 
\[ \imath\WD(r_{l,\imath}(\Pi_F)|_{G_{F_{\tu}}})^{\Fsemis}\prec_I
\rec(\Pi_{F,\tu}\otimes|\det|^{(1-m)/2}).\]
 (See the paragraph of \cite{chenevier} two before Lemma 3.14 for the definition of $\prec_I$.) For each $y \in f^{-1}(
Z^\reg\cap \Omega)$ with $z=f(y)$ corresponding to a pair $(\pi,\chi)$, we have 
\[ \imath\WD(M_y^{\semis}|_{G_{F_{\tu}}})^{\Fsemis}\cong
\rec(\pi_u\circ\imath_{\tu}^{-1}\otimes|\det|^{(1-m)/2})\prec_I
\rec(\Pi_{F,\tu}\otimes|\det|^{(1-m)/2})\]
(where the last relation follows from the choice of idempotent
$e_u$). By the proof of Proposition 7.8.19(iii) of \cite{bellchenbk} and the
Zariski-density of $f^{-1}(Z^{\reg}\cap\Omega)$ in $Y$, we have
$\imath\WD(M_y^{\semis}|_{G_{F_{\tu}}})^{\Fsemis}\prec_I
\rec(\Pi_{F,\tu}\otimes|\det|^{(1-m)/2})$ for all $y\in
Y(\Qlbar)$. Taking $y$ above $z_0$ gives the required result.
\end{proof}

\bibliographystyle{amsalpha}
\bibliography{barnetlambgeegeraghty}

\end{document}